\def\R{{\mathbb R}}
\def\Q{{\mathbb Q}}
\newtheorem{thm}{Theorem}
\newtheorem{lem}[thm]{Lemma}
\newtheorem{prop}[thm]{Proposition}
\title{A Finiteness Theorem in Galois Cohomology}
\author{Dylon Chow}
\begin{document}

\maketitle

\begin{abstract}
We prove the finiteness of the kernel of the localization map in the Galois cohomology of a connected reductive group over a global field.
\end{abstract}

\section{Introduction}

The purpose of this note is to prove:

\begin{thm} \label{main}
    (Borel-Serre, Borel-Prasad, Nisnevich). Let $G$ be a connected reductive group defined over a global field $F$. Let $V$ denote the set of places of $F$. The fibers of the canonical map \[\lambda \colon H^1(F,G) \to \prod_{v \in V}H^1(F_v,G)\] are finite.
\end{thm}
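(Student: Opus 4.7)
The plan is to reduce, in several stages, to the cases of a torus and of a semisimple simply connected group. First, by the standard twisting argument for non-abelian $H^1$, it suffices to prove that the kernel of $\lambda$ is finite for every connected reductive $F$-group: for any cocycle $\xi \in Z^1(F,G)$, twisting by $\xi$ yields a connected reductive $F$-group ${}_\xi G$ (an inner form of $G$), and the change-of-base-point bijection $H^1(F,G) \cong H^1(F, {}_\xi G)$ carries the fiber of $\lambda$ through $[\xi]$ onto the kernel of the corresponding localization map for ${}_\xi G$.

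Next, I would perform a dévissage along the short exact sequence $1 \to G^{\mathrm{der}} \to G \to T \to 1$, where $T = G/G^{\mathrm{der}}$ is a torus. The induced six-term exact sequence of pointed sets, compared place-by-place, reduces finiteness of $\ker \lambda_G$ to finiteness of $\ker \lambda_T$ and $\ker \lambda_{G^{\mathrm{der}}}$, with the orbits of the $H^1(F,T)$-action on the latter controlled again by the torus case. A second dévissage along $1 \to \mu \to G^{\mathrm{sc}} \to G^{\mathrm{der}} \to 1$, where $\mu$ is the fundamental group (a finite central $F$-group scheme) and $G^{\mathrm{sc}}$ is the simply connected cover, further reduces the semisimple case to the simply connected one and to finiteness for $\mu$; the latter is handled by embedding $\mu$ into a torus $T'$ and invoking the torus case via $1 \to \mu \to T' \to T'/\mu \to 1$.

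For tori, the finiteness of the first Tate-Shafarevich group $\mathrm{Sha}^1(F,T) = \ker\bigl(H^1(F,T) \to \prod_v H^1(F_v,T)\bigr)$ follows from Poitou-Tate duality: it is Pontryagin dual to the finite group $\mathrm{Sha}^2(F, X^*(T))$, where $X^*(T)$ is the character lattice. A more hands-on alternative goes via the $S$-unit theorem together with finiteness of the $S$-class group of a suitable splitting field of $T$. For simply connected semisimple $G^{\mathrm{sc}}$, the Hasse principle of Kneser-Harder-Chernousov (and, over global function fields, of Harder supplemented by Bruhat-Tits-Gille) asserts that $\lambda$ has trivial kernel, completing the reduction.

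The main obstacle in this strategy is the simply connected case: the Hasse principle for $G^{\mathrm{sc}}$ is a deep case-by-case theorem, with the hardest instances being type $E_8$ and the trialitarian forms of $D_4$ over number fields, and requires additional arguments over function fields of positive characteristic. Granting that theorem, the rest of the argument is diagram chasing with pointed-set exact sequences combined with Poitou-Tate duality for the torus ingredients.
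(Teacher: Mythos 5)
Your overall strategy --- reduce to the kernel by twisting, then d\'evissage first along $1 \to G^{\mathrm{der}} \to G \to T \to 1$ and then along $1 \to \mu \to G^{\mathrm{sc}} \to G^{\mathrm{der}} \to 1$ --- is the classical Borel--Serre route, and it is genuinely different from the paper, whose whole point is to avoid the fundamental group $\mu$: the paper instead uses a $z$-extension $1 \to Z \to G' \to G \to 1$ with $Z$ an induced torus and $G'_{\mathrm{der}}$ simply connected, so the only abelian objects that appear are tori, and the only input beyond the torus and simply connected cases is the injectivity of $\mathrm{Br}(E) \to \prod_w \mathrm{Br}(E_w)$. As written, your plan has a genuine gap over global function fields, which the theorem covers: when $\mathrm{char}(F)=p>0$ and $p$ divides the order of the fundamental group (e.g.\ $G^{\mathrm{der}}=\mathrm{PGL}_p$), the kernel $\mu$ of $G^{\mathrm{sc}} \to G^{\mathrm{der}}$ is a non-smooth finite group scheme such as $\mu_p$, the sequence is not exact for the \'etale topology, Galois cohomology of $\mu$ is the wrong object, and your appeal to Poitou--Tate must be replaced by a duality theory for finite \emph{flat} group schemes in fppf cohomology. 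That can be done, but it is a substantial extra input your sketch does not supply, and it is precisely what the $z$-extension device is designed to sidestep.

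A second, smaller problem is in the first d\'evissage: you invoke "the $H^1(F,T)$-action" on $H^1(F,G^{\mathrm{der}})$, but $G^{\mathrm{der}}$ is not central in $G$, so no such action exists. The correct statements are that the fibers of $H^1(F,G^{\mathrm{der}}) \to H^1(F,G)$ are orbits of $T(F)$ acting through the connecting map, and that the fiber of $H^1(F,G)\to H^1(F,T)$ over a class $\zeta$ is the image of $H^1(F,(G_z)_{\mathrm{der}})$ for a suitable twist. Moreover, a class $x$ in $H^1(F,(G_z)_{\mathrm{der}})$ whose image in $H^1(F,G_z)$ is locally trivial need not itself be locally trivial --- each $x_v$ only lies in the image of $T(F_v) \to H^1(F_v,(G_z)_{\mathrm{der}})$ --- so you need finiteness of the \emph{fibers}, not merely the kernel, of the localization map for the twisted derived groups, plus the fact that these local images are trivial for almost all $v$. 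These points are repairable, but they are exactly the bookkeeping that the paper's intermediate hypothesis "$G_{\mathrm{der}}$ simply connected" renders trivial, since in that case $H^1(F,G_{\mathrm{der}})$ is itself finite.
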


In more geometric language, this says that given a principal homogeneous space $X$ over $F$ for $G$, the principal homogeneous spaces over $F$ for $G$ which are isomorphic to $X$ over $F_v$ for all $v$ form a finite number of isomorphism classes over $F$. In the case when $F$ is a number field, this is ~\cite[Theorem 7.1]{BS64}. By a twisting argument, it suffices to prove that the kernel of $\lambda$ is finite. A proof that the kernel of $\lambda$ is finite is given in ~\cite[Theorem 6.8]{Borel1963}. As Platonov and Rapinchuk point out (see p.316 of ~\cite{PR}), there is a fairly standard proof in the case when $G$ is a torus, but the proof of Borel and Serre in the general case is very different and appeals to the reduction theory of adele groups.

On p.176 of ~\cite{BP90}, Borel and Prasad gave a proof in the function field case for any connected reductive group $G$. They first prove the theorem for semisimple groups in ~\cite[Theorem B.1]{BP89}. For arbitrary reductive groups, they use the finiteness of the class number of a semisimple group ~\cite[Proposition 3.9]{BP89}, the proof of which uses lower bounds on covolumes of arithmetic subgroups.

In this note, we give a totally different proof of the theorem. We take as given the case when $G$ is a torus. We prove the theorem in two steps. The first step is the case when the derived group of $G$ is simply connected. Then we use a $z$-extension to prove the theorem in general.

\section*{Acknowledgements} We would like to thank Mikhail Borovoi, Gopal Prasad and Ramin Takloo-Bighash for helpful comments.

\section{Proof of the Theorem}

First we record the statement of Theorem \ref{main} for tori:

\begin{prop} \label{tori}
    Theorem \ref{main} holds when $G$ is a torus.
\end{prop}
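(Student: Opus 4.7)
The plan is to reduce the torus case to a finiteness statement about $H^2$ with coefficients in a finite Galois module, where class field theory applies directly. As a first step I will exhibit, for any torus $T$ over $F$, a quasi-trivial resolution: fix a finite Galois extension $K/F$ that splits $T$, set $\Gamma = \mathrm{Gal}(K/F)$, choose any surjection of $\Gamma$-lattices $\mathbb{Z}[\Gamma]^n \twoheadrightarrow X^*(T)$, and dualize to obtain a short exact sequence of $F$-group schemes
\[
1 \to M \to Q \to T \to 1,
\]
in which $Q = \prod_i R_{K_i/F}\mathbb{G}_m$ is a product of Weil restrictions and $M$ is a finite $F$-group scheme of multiplicative type.

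The second step is to pass this resolution through the long exact sequence of Galois cohomology, both over $F$ and over each completion $F_v$. Since Shapiro's lemma together with Hilbert~$90$ gives $H^1(F, Q) = 0$ and $H^1(F_v, Q) = 0$ for every $v \in V$, the connecting homomorphism yields natural injections
\[
H^1(F, T) \hookrightarrow H^2(F, M), \qquad H^1(F_v, T) \hookrightarrow H^2(F_v, M).
\]
These injections are compatible with localization, so finiteness of $\ker \lambda$ will follow from finiteness of the analogous local-global kernel $\ker\bigl(H^2(F,M) \to \prod_v H^2(F_v,M)\bigr)$ for the finite module $M$.

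The final step invokes the classical fact, proved via Poitou--Tate duality, that for any finite Galois module $M$ over a global field the kernel of the localization map in $H^i$ is finite for each $i \geq 1$. Explicitly, the $i=2$ kernel is Pontryagin dual to the $i=1$ kernel for the Cartier dual of $M$, and the latter is controlled by inflation--restriction from the finite extension that splits the Cartier dual, combined with Chebotarev density at the unramified places. The main obstacle in the whole argument lies precisely here, in this class-field-theoretic input; the preceding reduction through the quasi-trivial resolution is a purely formal diagram chase. Since the introduction explicitly takes the torus case as given, I expect the written proof in the paper to consist of a citation rather than a redevelopment of this last step.
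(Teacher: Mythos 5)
Your first step contains a genuine error in the direction of the duality, and it undermines the rest of the argument. The anti-equivalence between groups of multiplicative type and their character modules is contravariant, so a surjection of $\Gamma$-lattices $\mathbb{Z}[\Gamma]^n \twoheadrightarrow X^*(T)$ with kernel $L$ dualizes to a short exact sequence $1 \to T \to Q \to S \to 1$ in which $T$ is the \emph{subgroup} and $S$ is the torus with character lattice $L$ --- not to a surjection $Q \twoheadrightarrow T$. Worse, the resolution you want, $1 \to M \to Q \to T \to 1$ with $Q$ quasi-trivial and $M$ finite, does not exist for a general torus: it would correspond to an embedding $X^*(T) \hookrightarrow X^*(Q)$ with finite cokernel, which forces $X^*(T)\otimes\mathbb{Q}$ to be isomorphic to a rational permutation representation of $\Gamma$. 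The norm-one torus of a cyclic cubic extension already fails this (its rational character representation is the augmentation ideal of $\mathbb{Q}[\mathbb{Z}/3]$). So the exact sequence on which your second and third steps rest is simply not available, and no choice of surjection from a free $\Gamma$-module will produce it.

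The overall strategy --- funnel $\ker\lambda$ into the locally-trivial part of $H^2$ of a finite module and invoke Poitou--Tate --- is sound and can be repaired: replace the quasi-trivial resolution by the multiplication-by-$m$ sequence $1 \to T[m] \to T \to T \to 1$ with $m = [K:F]$. A restriction--corestriction argument shows that $H^1(F,T)$ and every $H^1(F_v,T)$ are killed by $m$, so the connecting maps give compatible injections $H^1(F,T) \hookrightarrow H^2(F,T[m])$ and $H^1(F_v,T) \hookrightarrow H^2(F_v,T[m])$, and $\ker\lambda$ then embeds into the kernel of $H^2(F,T[m]) \to \prod_v H^2(F_v,T[m])$, whose finiteness is exactly the class-field-theoretic input you describe in your final step (with the usual caveat that in characteristic $p$ the $p$-part of $T[m]$ is not \'etale, so one must work with flat cohomology or argue prime-to-$p$ and $p$ separately). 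For the record, the paper itself disposes of the proposition in one line by citing global Nakayama--Tate duality for tori, as you anticipated it would.
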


\begin{proof}
This follows from the global Nakayama-Tate duality theorem. See ~\cite{PR}, Theorem 6.3 and the discussion preceding it. This reference only states Nakayama-Tate duality for number fields, but the same proof works for function fields. See ~\cite{Tate1966} for a proof of Nakayama-Tate duality.
\end{proof}

In the statement below, a local field is a finite extension of either $\R, \Q_p$, or $\mathbb{F}_p((t))$ for a prime $p$.

\begin{prop} \label{wellknown}
    \begin{enumerate}
     \item If $H$ is a connected reductive group over a local field $k$, then $H^1(k,H)$ is finite.
        \item If $H$ is a semisimple simply connected group over a nonarchimedean local field $k$, then $H^1(k,H)=1$. 
        \item Let $H$ be a semisimple, simply connected algebraic group over a global field $k$. The canonical map \[H^1(k,H) \to \prod_v H^1(k_v,H)\] is injective.
    \end{enumerate}
\end{prop}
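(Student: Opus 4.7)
All three statements are classical, and my plan is to reduce each to named results in the literature rather than give independent proofs.

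For (1), I would separate the archimedean and nonarchimedean cases. Over $\R$, finiteness of $H^1(\R, H)$ for any linear algebraic group $H$ is the main result of Borel--Serre, \emph{Th\'eor\`emes de finitude en cohomologie galoisienne} (1964). Over a nonarchimedean local field of characteristic zero, Theorem 6.1 of the same paper gives the statement, and the positive characteristic case was completed by later authors in the Bruhat--Tits tradition. One can alternatively argue uniformly by taking a $z$-extension $1 \to Z \to H' \to H \to 1$ with $Z$ an induced central torus and $(H')^{\mathrm{der}}$ simply connected: by Shapiro's lemma $H^1(k, Z) = 0$, and the exact sequences attached to this $z$-extension and to $1 \to (H')^{\mathrm{der}} \to H' \to (H')^{\mathrm{ab}} \to 1$, together with part (2) and Proposition \ref{tori}, reduce the claim to checking that the image of the connecting map $H^1(k, H) \to H^2(k, Z)$ is finite.

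For (2), I would appeal to Kneser in the $p$-adic case and to Bruhat--Tits--Rousseau in the positive characteristic case. The proof proceeds case by case along the classification of simply connected quasi-split types, using Bruhat--Tits theory to produce cohomologically trivial parahoric subgroups, and reducing inner forms to the quasi-split case by a twisting argument. A convenient textbook reference is Platonov--Rapinchuk, Theorem 6.4.

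For (3), the Hasse principle for simply connected semisimple groups over a global field, I would cite the combined work of Kneser (classical types, $G_2$, $F_4$), Harder ($E_6$, $E_7$, and trialitarian $D_4$), and Chernousov ($E_8$); the function field case is due to Harder. The strategy is again a case-by-case reduction to concrete classification results such as algebras with involution, Jordan algebras, and Cayley algebras. The main obstacle common to (2) and (3) is conceptual rather than technical: no type-free proof is known, and every published argument invokes the classification of absolutely simple simply connected groups. Since the present note takes these results as inputs, the plan is to quote them and move on.
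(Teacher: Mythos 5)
Your proposal is correct and follows essentially the same route as the paper: all three parts are established by citation to the classical literature (Borel--Serre and Serre for (1), Kneser/Bruhat--Tits for (2), Kneser--Harder--Chernousov and Harder for (3)), which is exactly what the paper does. The alternative $z$-extension sketch you offer for (1) leaves the finiteness of the image of $H^1(k,H)\to H^2(k,Z)$ unaddressed, but since your primary argument is the citation, nothing is missing.
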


\begin{proof}
    \begin{enumerate}
        \item In characteristic $0$, this is proven in ~\cite{BS64}. For a textbook reference, see ~\cite[Theorem 6.14]{PR}. In characteristic $p>0$, see ~\cite[III.4.3, Remark 2]{Serre79}.
        \item This is due to Bruhat-Tits. See ~\cite[Thm. in section 4.7]{BT1987}.
        \item In the number field case, this was proved in ~\cite{harder1966} except for the case when $G$ has a factor of type $E_8$, which was proved in ~\cite{chernousov1989}. The function field case is proved in ~\cite{Harder1975} - in fact, in this case, $H^1(k,H)=0$.
    \end{enumerate}
\end{proof}

Now we proceed to the proof of Theorem \ref{main}. The first step is to handle the case when the derived group of $G$ is simply connected.

\begin{prop} \label{derived group}
    If the derived group of $G$ is simply connected, then Theorem 1 holds.
\end{prop}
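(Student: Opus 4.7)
The plan is to exploit the short exact sequence
\[
1 \to G^{\mathrm{der}} \to G \to T \to 1,
\]
where $T = G/G^{\mathrm{der}}$ is a torus (because $G$ is reductive) and $G^{\mathrm{der}}$ is semisimple and simply connected by hypothesis. Naturality of the localization maps yields a commutative square, so in particular the induced map $H^1(F,G) \to H^1(F,T)$ sends $\ker \lambda_G$ into $\ker \lambda_T$. By Proposition \ref{tori}, $\ker \lambda_T$ is finite, and the problem reduces to showing that each fiber of $H^1(F,G) \to H^1(F,T)$ meets $\ker \lambda_G$ in a finite set.

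To control a single fiber, I would fix a class $\xi$ in that fiber (any element of $\ker \lambda_G$ in it will do) and invoke the standard twisting technique for non-abelian cohomology. Let $G_\xi$ denote the inner twist of $G$ by $\xi$. Its derived group $G^{\mathrm{der}}_\xi$ is an inner form of $G^{\mathrm{der}}$, hence is again semisimple and simply connected, and $G_\xi / G^{\mathrm{der}}_\xi = T$, since inner twisting is trivial on the abelianization. Because $\xi$ is locally trivial at every place, the twisting bijection $H^1(F, G_\xi) \to H^1(F, G)$ (sending the neutral class to $\xi$) is compatible with the localization maps. Under this bijection, the fiber of $H^1(F, G) \to H^1(F, T)$ containing $\xi$ corresponds to the fiber over the neutral class in $H^1(F, G_\xi) \to H^1(F, T)$, which by exactness is the image of $H^1(F, G^{\mathrm{der}}_\xi) \to H^1(F, G_\xi)$. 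It therefore suffices to prove that $H^1(F, G^{\mathrm{der}}_\xi)$ is finite.

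This last step combines all three parts of Proposition \ref{wellknown}. The Hasse principle (part 3) gives an injection
\[
H^1(F, G^{\mathrm{der}}_\xi) \hookrightarrow \prod_v H^1(F_v, G^{\mathrm{der}}_\xi),
\]
and part 2 shows that every nonarchimedean factor is trivial. In the function field case the source is therefore trivial, and in the number field case its image lies in a finite product of archimedean cohomology sets, each of which is finite by part 1. The delicate point I expect to require the most care is the twisting step, namely verifying that $G^{\mathrm{der}}_\xi$ really is simply connected and that the twisting bijection intertwines the two localization maps when $\xi$ is locally trivial. These compatibilities are standard for non-abelian $H^1$, and once granted the reduction to the cases already treated in Propositions \ref{tori} and \ref{wellknown} is automatic.
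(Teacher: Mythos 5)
Your proposal is correct and follows essentially the same route as the paper: the exact sequence $1 \to G_{\mathrm{der}} \to G \to T \to 1$, finiteness of $\ker\lambda_T$ from Proposition \ref{tori}, finiteness of $H^1(F,G_{\mathrm{der}})$ from the three parts of Proposition \ref{wellknown}, and a twist by a class in each relevant fiber to identify that fiber with the image of the $H^1$ of the (still simply connected) twisted derived group. The only cosmetic difference is that the paper twists by a representative of an arbitrary class in $\sigma^{-1}(\zeta)$ for $\zeta \in \ker(\gamma)$ rather than insisting the twisting class be locally trivial, which changes nothing in the argument.
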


\begin{proof}
We consider the short exact sequence \[1 \to G_{\text{der}} \to G \to D \to 1,\] where $D$ denotes the quotient of $G$ by its derived group $G_{\text{der}}$. This gives us a commutative diagram with exact rows

\begin{tikzcd}
    H^1(F,G_{\text{der}})\arrow{r}{\rho}\arrow{d}{\alpha}
    &H^1(F,G)\arrow{r}{\sigma}\arrow{d}{\beta}
    &H^1(F,D)\arrow{d}{\gamma}\\
    \prod_v H^1(F_v,G_{\text{der}})\arrow{r}{\rho'}&\prod_v H^1(F_v,G)\arrow{r}{\sigma'}&\prod_v H^1(F_v,D).
\end{tikzcd}

We want to show that $\text{ker}(\beta)$ is finite. We have \[\text{ker}(\beta) \subset \sigma^{-1}(\text{ker}(\gamma)),\] and by Proposition \ref{tori}, $\text{ker}(\gamma)$ is finite. Proposition \ref{wellknown} tells us that $H^1(F_v,G_{\text{der}})$ is trivial for all finite places $v$, that $H^1(F_v,G_{\text{der}})$ is finite for all infinite places $v$, and that $\alpha$ is injective. Together, these statements imply that $H^1(F,G_{\text{der}})$ is finite. Thus $\text{ker}(\sigma) = \text{im}(\rho)$ is also finite. Now we use a standard twisting argument (see section I.5.4 of ~\cite{Serre79}). Let $\zeta \in \text{ker}(\gamma)$, and choose a 1-cocycle $z$ that represents a class in $\sigma^{-1}(\zeta)$. By twisting the sequence above by $z$, we obtain a short exact sequence \[1 \to (G_z)_{\text{der}} \to G_z \to D \to 1.\] This induces a map $H^1(F,G_z) \to H^1(F,D)$ whose kernel is $\sigma^{-1}(\zeta)$ (after identifying $H^1(F,G)$ with $H^1(F,G_z)$). 
Therefore, for every element $\zeta \in \text{ker}(\gamma)$, $\sigma^{-1}(\zeta)$ is finite. and so $\text{ker}(\beta)$ is finite.
    
\end{proof}

We have proved Theorem \ref{main} when $G_{\text{der}}$ is simply connected. Now we use $z$-extensions to prove it in general. We choose a finite Galois extension $E/F$ splitting $G$ and an extension \[1 \to Z \to G' \to G \to 1\] such that $Z$ is an induced torus (i.e., it is obtained by Weil restriction of scalars from a split $E$-torus) and the derived group of $G'$ is simply connected. Such an extension exists - see, for instance, the discussion in the proof of Lemma A.1 of ~\cite{LM2015}. This gives a commutative diagram with exact rows

\begin{tikzcd}
    0\arrow{r}
    &H^1(F,G')\arrow{r}{\alpha}\arrow{d}{\gamma_1}
    &H^1(F,G)\arrow{r}{\beta}\arrow{d}{\gamma_2}
    &H^2(F,Z)\arrow{d}{\gamma_3}\\
    0\arrow{r}
    &\prod_v H^1(F_v,G')\arrow{r}{\alpha'}
    &\prod_v H^1(F_v,G)\arrow{r}{\beta'}
    &\prod_v H^2(F_v,Z).
\end{tikzcd}

The $0$ on the leftmost term in the top row comes from the fact that the $F$-torus $Z$ is induced. Since  $Z \times_{\text{Spec}(F)} \text{Spec}(F')$ is also induced over $F'$ for all fields $F' \supset F$, the leftmost term in the bottom row is also $0$. We will use the following lemma: 

\begin{lem} \label{global cft}
    The homomorphism $H^2(F,Z) \to \prod_v H^2(F_v,Z)$ is injective.
\end{lem}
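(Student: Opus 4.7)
The plan is to identify the map $H^2(F,Z) \to \prod_v H^2(F_v,Z)$ with the localization map on Brauer groups of the splitting field $E$, and then invoke the Albert--Brauer--Hasse--Noether theorem from global class field theory. Since this classical statement holds over both number fields and function fields, it will yield Lemma \ref{global cft} in the required generality.

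To set this up, I would first use the hypothesis that $Z$ is induced to write $Z = R_{E/F}(T)$ for a split $E$-torus $T$. Decomposing $T \cong \mathbb{G}_m^n$ reduces us (since cohomology commutes with finite products) to the case $Z = R_{E/F}(\mathbb{G}_m)$. Shapiro's lemma then gives a canonical isomorphism $H^2(F, R_{E/F}(\mathbb{G}_m)) \cong H^2(E, \mathbb{G}_m) = \mathrm{Br}(E)$. For each place $v$ of $F$, the decomposition $E \otimes_F F_v \cong \prod_{w \mid v} E_w$ produces an isomorphism $R_{E/F}(\mathbb{G}_m) \times_F F_v \cong \prod_{w \mid v} R_{E_w/F_v}(\mathbb{G}_m)$, and a second application of Shapiro yields $H^2(F_v, R_{E/F}(\mathbb{G}_m)) \cong \prod_{w \mid v} \mathrm{Br}(E_w)$. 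Taking the product over $v$ and reindexing by the places $w$ of $E$ gives $\prod_v H^2(F_v, Z) \cong \prod_w \mathrm{Br}(E_w)$.

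The naturality of Shapiro's lemma in base change ensures that under these identifications the arrow of the lemma becomes the classical localization map $\mathrm{Br}(E) \to \prod_w \mathrm{Br}(E_w)$. Its injectivity is the Albert--Brauer--Hasse--Noether theorem, which sits inside the fundamental exact sequence of global class field theory and holds in both the number field and function field settings.

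The main, and essentially only, obstacle is verifying that Shapiro's isomorphism is natural enough under base change so that the global and local identifications assemble into a commutative square with the map of the lemma. This follows formally from the adjunction between restriction and induction of Galois modules that underlies Shapiro's lemma, but it warrants an explicit diagrammatic check before citing Albert--Brauer--Hasse--Noether.
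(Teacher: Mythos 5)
Your proposal is correct and follows essentially the same route as the paper: reduce via Shapiro's lemma and the decomposition $E \otimes_F F_v \cong \prod_{w \mid v} E_w$ to the injectivity of $\mathrm{Br}(E) \to \prod_w \mathrm{Br}(E_w)$, which is the Albert--Brauer--Hasse--Noether theorem. The compatibility of the global and local Shapiro isomorphisms that you flag is indeed the one point requiring care, and the paper likewise takes it for granted.
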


\begin{proof}
    The torus $Z$ is the Weil restriction of some split torus over $E$, say $Z=R_{E/F}(S)$ where $S \cong \mathbb{G}_m^r$. By Shapiro's lemma we have \[H^2(F,Z) \cong H^2(E,(\mathbb{G}_{m,E})^r).\] Let $v$ be a place of $F$. Then we have (see ~\cite{milne2017}, p.58)
    \begin{align*}
        (R_{E/F}(S))_{F_v} &\cong R_{E \otimes_F F_v}(S_{E \otimes_F F_v}) \\ 
        &\cong \left(\prod_{w \lvert v} R_{E_w/F_v} (\mathbb{G}_m)\right)^r.
    \end{align*}
    Therefore, 
    \begin{align*}
        H^2(F_v,Z) &\cong \prod_{w \lvert v} H^2(F_v, R_{E_w/F_v} (\mathbb{G}_m)^r) \\
        &=\prod_{w \lvert v} H^2(E_w,\mathbb{G}_m)^r.
    \end{align*}
Thus the map $H^2(F,Z) \to \prod_v H^2(F_v,Z)$ becomes \[(\text{Br}(E))^r\to (\prod_{w \in V_E} \text{Br}(E_w))^r,\] which is injective by global class field theory.
\end{proof}

Now we return to the proof of the theorem. We want to show that $\text{ker}(\gamma_2)$ is finite. Suppose that $x \in \text{ker}(\gamma_2)$. Then $\gamma_3(\beta(x))=0$. 
By Lemma \ref{global cft}, $\beta(x)=0$. Thus $x \in \text{im}(\alpha)$, say $\alpha(w)=x$. But then $\alpha'(\gamma_1(w))=0$, and so $\gamma_1(w)=0$. Therefore, $\text{ker}(\gamma_2) \subset \alpha(\text{ker}(\gamma_1))$. Since $\text{ker}(\gamma_1)$ is finite by Proposition \ref{derived group}, $\text{ker}(\gamma_2)$ is also finite. This completes the proof of Theorem \ref{main}.

\bibliographystyle{plain}
\bibliography{biblio.bib}
\end{document}